\newtheorem{prop}{Proposition}[section]
\newtheorem{thm}[prop]{Theorem}
\newtheorem{cor}[prop]{Corollary}
\theoremstyle{definition}
\def \ds {\displaystyle}        \def \a {\alpha}         
\begin{document}
\centerline
{
 \bf 
A Generalization of the ``Raboter" operation.
}

\bigskip
\centerline
{\it By Yonah BIERS-ARIEL}
\bigskip

\section{Introduction} In a recent talk at Rutgers' Experimental Math Seminar, Neil Sloane described the ``raboter" operation for the base two representation of a number \cite{talk}. From this representation, one reduces by one the length of each run of consecutive 1s and 0s. Denote this operation by $r(n)$; so, for example, $r(12) = 2$ because 12 is represented in binary as 1100, and reducing the length of each run by one yields 10. 

Sloane also defined $L(k)=\sum_{n=2^k}^{2^{k+1}-1}r(n)$ and conjectured that $L(k) = 2\cdot 3^{k-1} - 2^{k-1}$, a fact which was quickly proven by Doron Zeilberger \cite{drz} and Chai Wah Wu \cite{oeis}.

In Section \ref{bases}, we generalize this theorem to bases other than 2. Let $r(b,n)$ be the the number whose base-$b$ representation is generated by taking the base-$b$ representation of $n$ and shortening each run of consecutive identical elements by one. Further, let $L(b,n) = \sum_{n=b^k}^{b^{k+1}-1} r(b,n)$. We will prove that 
\[L(b,k) = \frac{b(b-1)}{2b-1} (2b-1)^k - \frac{b-1}{2} b^k.\]

In Section \ref{powers}, we raise $r(b,n)$ to various powers. Define $L(p,b,k) = \sum_{n=b^k}^{b^{k+1}-1} r(b,n)^p$; we develop an algorithm in Maple to rigorously compute $L(p,b,k)$ as an expression in terms of $k$ for any fixed $p,b$. In addition, for any fixed $p$, we can conjecture an expression for $L(p,b,k)$ in terms of $b$ and $k$.

\section{More General Bases}\label{bases}
Following the example of Zeilberger, we find a recurrence satisfied by $L(b,k)$ and then find a closed form expression satisfying the same recurrence.
\begin{thm} $\ds L(b,k) = (2b-1)\cdot L(b,k-1) + b^{k-1}\frac{(b-1)^2}{2} $ for $k \ge 2$.
\end{thm}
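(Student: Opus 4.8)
\emph{Proof proposal.} The plan is to read $L(b,k)$ as a sum over base-$b$ digit strings and to track how the run-shortening operation changes when a single least-significant digit is appended. First I would write each $n$ with $b^k \le n < b^{k+1}$ as a length-$(k+1)$ base-$b$ string $s$ with nonzero leading digit, let $\rho$ be the map deleting one symbol from each maximal run of equal consecutive symbols, and observe that $r(b,n) = \mathrm{val}(\rho(s))$, so that $L(b,k) = \sum_s \mathrm{val}(\rho(s))$ over all such $s$. I would note at once that although $\rho$ can shrink a run to nothing and thereby make two previously separated runs adjacent, this is harmless, since the value of a base-$b$ string depends only on its digit sequence, not on how that sequence is grouped into runs.

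Next I would set up the recursion by conditioning on the least-significant digit. Deleting the last digit $c$ of a length-$(k+1)$ string $s$ with nonzero leading digit gives a length-$k$ string $s'$ with the same (nonzero) leading digit, and this is a bijection between such strings $s$ and pairs $(s',c)$ with $s'$ a length-$k$ string of nonzero leading digit and $c \in \{0,\dots,b-1\}$. There are two cases. If $c$ differs from the last digit of $s'$, appending $c$ creates a fresh run of length one at the end, which $\rho$ erases, so $\rho(s) = \rho(s')$ and hence $\mathrm{val}(\rho(s)) = \mathrm{val}(\rho(s'))$. If $c$ equals the last digit of $s'$, appending $c$ lengthens the final run of $s'$ by one, so $\rho(s)$ is precisely $\rho(s')$ with one extra copy of $c$ appended — this stays correct in the boundary case where that final run of $s'$ has length one, so that $\rho$ would otherwise have deleted that digit — and hence $\mathrm{val}(\rho(s)) = b\cdot\mathrm{val}(\rho(s')) + c$. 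For a fixed $s'$ exactly one of the $b$ values of $c$ lands in the second case, so the $b$ strings extending $s'$ contribute in total $(2b-1)\,\mathrm{val}(\rho(s')) + \ell(s')$, where $\ell(s')$ denotes the last digit of $s'$. Summing over $s'$ gives $L(b,k) = (2b-1)\,L(b,k-1) + \sum_{s'} \ell(s')$.

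Finally I would evaluate $\sum_{s'} \ell(s')$, the sum over length-$k$ strings with nonzero leading digit. For $k \ge 2$ each digit value $v$ occurs as the last digit of exactly $(b-1)b^{k-2}$ such strings (the leading digit ranges over the $b-1$ nonzero values and the remaining $k-2$ digits are free), so $\sum_{s'}\ell(s') = (b-1)b^{k-2}\cdot\frac{(b-1)b}{2} = \frac{(b-1)^2}{2}b^{k-1}$, which is exactly the claimed inhomogeneous term. I do not expect a genuine obstacle here: the only point needing care is the Case-A identity $\rho(s) = \rho(s')\,c$ in the boundary situation above, together with the observation that run-merging inside $\rho(s')$ cannot change the represented value; once those are pinned down, the rest is bookkeeping.
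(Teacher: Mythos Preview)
Your proposal is correct and follows essentially the same route as the paper's own proof: both condition on the last digit, split into the two cases according to whether it matches the penultimate digit, obtain $r(b,s)=r(b,s')$ or $r(b,s)=b\,r(b,s')+c$ accordingly, and then sum the resulting $(2b-1)\,r(b,s')+\ell(s')$ over all $s'$. If anything, you are more explicit than the paper about the boundary case (a final run of length one in $s'$) and about why run-merging after $\rho$ is harmless for the value.
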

\begin{proof} 
There are $b^{k+1} - b^{k} = b^{k}(b-1)$ numbers which contribute to $L(b,k)$ are exactly those numbers whose base-$b$ representations use $k+1$ digits, so each can be written as $Ab_1b_2$ where $A \in \{1,\dots,b-1\} \times \{0,\dots,b-1\}^{k-2}$ and $b_1, b_2 \in \{0,\dots,b-1\}$. If $b_1 \neq b_2$, then $b_2$ is a run of just one element, so the raboter operation eliminates it and $r(b,Ab_1b_2) = r(b,Ab_1)$. Numbers with representations $Ab_1$ are exactly those which were counted in the calculation of $L(b,k-1)$, and each is counted $b-1$ times here, once for each $b_2 \neq b_1$. 

If $b_2 = b_1$, then the base-$b$ representation of $r(b,Ab_1b_2)$ is the representation of $r(b,Ab_1)$ with $b_2$ appended to the end, and so $r(b,Ab_1b_2) = b\cdot r(b,Ab_1) + b_2$. Thus,
\begin{align*}
L(b,k) &= \sum_A \sum_{b_1} r(b,Ab_1b_1) + \sum_{b_2 \neq b_1} r(b,Ab_1b_2)
\\
&= \sum_A \sum_{b_1} b\cdot r(b,Ab_1) +b_1 + \sum_{b_2 \neq b_1} r(b,Ab_1)
\\
&= \sum_A \sum_{b_1} (2b-1)r(b,Ab_1) + \sum_A \sum_{b_1} b_1
\\
&= (2b-1)L(b,k-1) + (b-1)b^{k-2}\frac{b(b-1)}{2}
\\
&= (2b-1)L(b,k-1) + b^{k-1}\frac{(b-1)^2}{2}.
\end{align*}
\end{proof}
Together with initial condition $L(b,1) = \frac{b(b-1)}{2}$, this determines the sequence $(L(b,k))_{k=1}^\infty$. Finding an explicit formula for $L(b,k)$ is now just a matter of finding a formula which obeys this same recurrence. 
\begin{cor}$ L(b,k) = \frac{b(b-1)}{2b-1} (2b-1)^k - \frac{b-1}{2} b^k$.
\end{cor}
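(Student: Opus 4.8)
The plan is to use the fact that a first-order linear recurrence together with a single initial value determines a sequence uniquely, so it suffices to check that the claimed closed form both satisfies the recurrence of the Theorem and agrees with $L(b,1)$. Write $F(k) = \frac{b(b-1)}{2b-1}(2b-1)^k - \frac{b-1}{2}b^k$. First I would verify the base case $k = 1$: here $F(1) = b(b-1) - \frac{b-1}{2}b = \frac{b(b-1)}{2}$, which matches the value $L(b,1) = \frac{b(b-1)}{2}$ recorded just after the Theorem.

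Next I would substitute $F$ into the recurrence. Since $F$ is a linear combination of the geometric sequences $(2b-1)^k$ and $b^k$, I can handle the two pieces separately. The piece $\frac{b(b-1)}{2b-1}(2b-1)^k$ satisfies the homogeneous relation $x_k = (2b-1)x_{k-1}$ exactly, so it contributes nothing to $F(k) - (2b-1)F(k-1)$. For the piece $-\frac{b-1}{2}b^k$, a short computation gives $-\frac{b-1}{2}b^k + (2b-1)\cdot\frac{b-1}{2}b^{k-1} = \frac{b-1}{2}b^{k-1}\bigl((2b-1)-b\bigr) = b^{k-1}\frac{(b-1)^2}{2}$, which is exactly the inhomogeneous term in the Theorem. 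Hence $F$ obeys $F(k) = (2b-1)F(k-1) + b^{k-1}\frac{(b-1)^2}{2}$ for every $k \ge 2$.

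Since $(L(b,k))_{k\ge 1}$ and $(F(k))_{k \ge 1}$ satisfy the same first-order recurrence and agree at $k=1$, a one-line induction on $k$ finishes the proof. Equivalently, one could \emph{derive} rather than guess the formula: the homogeneous solutions are the multiples of $(2b-1)^k$, and since the forcing term is proportional to $b^{k-1}$ with $b \neq 2b-1$ when $b \ge 2$, the ansatz $D b^k$ produces a particular solution with $D = -\frac{b-1}{2}$, after which matching $L(b,1)$ pins down the coefficient $\frac{b(b-1)}{2b-1}$ of the homogeneous part. I do not expect any genuine obstacle here: the only points requiring care are the arithmetic of the base case and keeping the exponents $b^k$ versus $b^{k-1}$ aligned in the forcing term, and the hypothesis $b \ge 2$ is what guarantees $2b-1 \neq b$ so that the two geometric components are linearly independent.
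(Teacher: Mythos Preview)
Your proof is correct and follows essentially the same strategy as the paper: verify that the claimed closed form satisfies the recurrence from the Theorem together with the initial value $L(b,1)=\tfrac{b(b-1)}{2}$, so that uniqueness of solutions to a first-order recurrence gives $L(b,k)=F(k)$. The only difference is cosmetic---the paper first explains how the ansatz $\alpha_1(2b-1)^k+\alpha_2 b^k$ was discovered (via a $2\times 2$ linear system at $k=1,2$) and then delegates the verification to a computer algebra system, whereas you carry out the verification explicitly by hand.
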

\begin{proof}
With some help from Doron Zeilberger's Maple package Cfinite, we conjecture that the formula for $L(b,k)$ has the form $\a_1 (2b-1)^k + \a_2 b^k$, so we solve the system of equations 
\begin{align*}
&\a_1(2b-1) + \a_2 b =  \frac{b(b-1)}{2}
\\
&\a_1(2b-1)^2 +\a_2 b^2 = (2b-1) \frac{b(b-1)}{2} +b\frac{(b-1)^2}{2}
\end{align*}
for $\a_1,\a_2$ and find $\a_1 = \frac{b(b-1)}{2b-1}$ and $\a_2 = -\frac{b}{2} + \frac{1}{2}$.
Let $L'(b,k) = \ds \frac{b(b-1)}{2b-1} (2b-1)^k - \frac{b-1}{2} b^k$. Proving that $L(b,k) = L'(b,k)$ is simply a matter of verifying that $L'(b,1) =  \frac{b(b-1)}{2}$ and $\ds L'(b,k) =(2b-1)\cdot L'(b,k-1) + b^{k-1}\frac{(b-1)^2}{2} $ for $k \ge 2$, which can easily be done using Maple or any other computer algebra system.
\end{proof}

\section{Higher Moments}\label{powers}
With a formula for $L(b,k)$ found, we consider the following additional generalization:
\[L(p,b,k) = \sum_{n=2^k}^{2^{k+1}-1} r(b,n)^p;\]
that is the sum of $r(b,n)^p$ taken over all numbers $n$ whose base-$b$ representation has $k+1$-digits. 
The trick in this case is to work inductively beginning with the (solved) $p=1$ case, and, along the way compute $L(l,p,b,k)$ which we define to be the sum of $r(b,n)^p$ taken over all numbers $n$ whose base-$b$ representation has $k+1$-digits, the last of which is $l$.

In order to compute $L(l,p,b,k)$, we use the following recurrence:
\begin{thm}
$L(l,p,b,k) = (b^p-1)\cdot L(l,p,b,k-1)+ L(p,b,k-1) + \sum_{i=1}^p l^i b^{p-i} \binom{p}{i} L(l,p-i,b,k-1).$
\end{thm}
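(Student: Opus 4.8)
The plan is to follow the template of Theorem 2.1: partition the numbers counted by $L(l,p,b,k)$ --- the $(k+1)$-digit base-$b$ strings ending in $l$ --- according to the digit immediately preceding the final $l$. Assume $k \ge 2$ and write each such number as $Ab_1l$ with $A \in \{1,\dots,b-1\}\times\{0,\dots,b-1\}^{k-2}$ and $b_1 \in \{0,\dots,b-1\}$. Because the leading digit of $A$ is nonzero, $Ab_1$ is a legitimate $k$-digit string, so as $b_1$ ranges over $\{0,\dots,b-1\}$ these strings $Ab_1$ are exactly the ones counted by $L(b_1,p,b,k-1)$, and $\sum_{b_1=0}^{b-1}L(b_1,p,b,k-1) = L(p,b,k-1)$.

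First I would dispose of the case $b_1 \ne l$: the trailing $l$ is then a run of length one, so the raboter operation deletes it, giving $r(b,Ab_1l) = r(b,Ab_1)$ and hence $r(b,Ab_1l)^p = r(b,Ab_1)^p$. Summing over all admissible $A$ and all $b_1 \ne l$ contributes $\sum_{b_1 \ne l} L(b_1,p,b,k-1) = L(p,b,k-1) - L(l,p,b,k-1)$.

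Next comes the case $b_1 = l$, where the final $l$ extends the last run; exactly as in Theorem 2.1, the base-$b$ representation of $r(b,All)$ is that of $r(b,Al)$ with an extra $l$ appended, so $r(b,All) = b\cdot r(b,Al) + l$. Raising this to the $p$-th power and applying the binomial theorem gives $r(b,All)^p = \sum_{i=0}^p \binom{p}{i} b^{p-i} l^i\, r(b,Al)^{p-i}$, and summing over all admissible $A$ replaces each inner sum $\sum_A r(b,Al)^{p-i}$ by $L(l,p-i,b,k-1)$. Splitting off the $i=0$ term, this case contributes $b^p L(l,p,b,k-1) + \sum_{i=1}^p l^i b^{p-i}\binom{p}{i} L(l,p-i,b,k-1)$.

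Adding the two contributions finishes the proof: the $-L(l,p,b,k-1)$ coming from the $b_1\ne l$ case merges with the $b^p L(l,p,b,k-1)$ from the $b_1 = l$ case to produce $(b^p-1)L(l,p,b,k-1)$, while the leftover pieces are precisely $L(p,b,k-1)$ and the claimed binomial sum. The argument is largely mechanical once the case split and the identity $r(b,All) = b\cdot r(b,Al)+l$ are in hand; the step most in need of care is the bookkeeping in the $b_1 = l$ case --- keeping the powers of $b$ and $l$ and the binomial coefficients straight, and observing that it is the $i=0$ term that yields a clean $b^p$, so the missing $-1$ has to be furnished by the $b_1 \ne l$ case. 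It is also worth recording the hypothesis $k \ge 2$, needed so that the block $A$ is nonempty and $Ab_1$ genuinely has $k$ digits.
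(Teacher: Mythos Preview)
Your argument is correct and follows the paper's proof essentially step for step: the same decomposition of $(k+1)$-digit strings ending in $l$ as $Ab_1l$, the same case split $b_1\neq l$ versus $b_1=l$, the same use of $r(b,All)=b\cdot r(b,Al)+l$ together with the binomial theorem, and the same regrouping of the $i=0$ term with the $-L(l,p,b,k-1)$ piece. Your explicit mention of the hypothesis $k\ge 2$ is a welcome clarification the paper leaves implicit.
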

\begin{proof} The numbers with length-$(k+1)$ base-$b$ representations ending in $l$ are exactly those which can be written as $Ab_1b_2$ with $A\in \{1,\dots,b-1\} \times \{0,\dots,b-1\}^{k-2}, b_1 \in \{0,\dots,b-1\},$ and $b_2 = l$. Therefore,
\begin{align*} L(l,p,b,k) &= \sum_A \Big(\sum_{b_1 \neq l} r(b,Ab_1 l)^p + r(b,All)^p\Big)\\
&= \sum_A \Big(\sum_{b_1 \neq l} r(b,Ab_1)^p + (b\cdot r(b,Al)+l)^p\Big)\\
&= L(p,b,k-1) - L(l,p,b,k-1) + \sum_A \sum_{i=0}^p \binom{p}{i} b^{p-i} r(b,Al)^{p-i} l^{i} \\
&= L(p,b,k-1) - L(l,p,b,k-1) + b^pL(l,p,b,k-1) \\
&\hspace{1cm}+ \sum_{i=1}^p \binom{p}{i} b^{p-i} L(l,p-i,b,k-1)^{p-i} l^{i}\\
&= (b^p-1)\cdot L(l,p,b,k-1)+ L(p,b,k-1) + \sum_{i=1}^p l^i b^{p-i} \binom{p}{i} L(l,p-i,b,k-1).
\end{align*}
\end{proof}
We find a similar recurrence for $L(p,b,k)$.
\begin{thm}
$L(p,b,k) = (b^p+b-1)L(p,b,k-1) + \sum_{l=0}^{b-1} \sum_{i=1}^p b^{p-i} l^i \binom{p}{i}L(l,p-i,b,k-1)$.
\end{thm}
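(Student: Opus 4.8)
The plan is to reduce this statement to the preceding theorem via the obvious observation that every length-$(k+1)$ base-$b$ representation ends in exactly one digit $l \in \{0,\dots,b-1\}$, and that these cases partition the numbers being summed, so
\[L(p,b,k) = \sum_{l=0}^{b-1} L(l,p,b,k).\]
First I would substitute the recurrence of the previous theorem into the right-hand side. This produces three groups of terms: $\sum_{l=0}^{b-1}(b^p-1)L(l,p,b,k-1)$, then $\sum_{l=0}^{b-1} L(p,b,k-1)$, and finally the double sum $\sum_{l=0}^{b-1}\sum_{i=1}^p l^i b^{p-i}\binom{p}{i}L(l,p-i,b,k-1)$.

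Next I would collapse the first two groups. Applying the identity $\sum_{l=0}^{b-1} L(l,p,b,k-1) = L(p,b,k-1)$ to the first group turns it into $(b^p-1)L(p,b,k-1)$, while the second group is simply $b\cdot L(p,b,k-1)$ since its summand does not depend on $l$. Adding these gives the coefficient $(b^p-1)+b = b^p+b-1$ in front of $L(p,b,k-1)$, and the remaining double sum is already exactly the sum appearing in the claimed identity, which finishes the argument.

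Alternatively — and this is really the same computation unwound — one can redo the decomposition $n = Ab_1b_2$ used in the earlier proofs directly: the terms with $b_1 \neq b_2$ contribute $(b-1)L(p,b,k-1)$, while the terms with $b_1 = b_2 = l$ contribute $\sum_{l=0}^{b-1}\sum_{i=0}^p \binom{p}{i}b^{p-i}l^i L(l,p-i,b,k-1)$ after expanding $(b\cdot r(b,Al)+l)^p$ by the binomial theorem. Peeling off the $i=0$ term gives $b^p L(p,b,k-1)$, and combining this with the $(b-1)L(p,b,k-1)$ from the unequal case again yields the coefficient $b^p+b-1$.

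I do not anticipate a genuine obstacle here; the only point that needs care is the bookkeeping around the $i=0$ term of the binomial expansion — it must be separated out, since it is the only place where a power of $r(b,Al)$ as large as $p$ occurs, and recombined with the $b_1\neq b_2$ contribution to build the coefficient $b^p+b-1$. Everything else is a routine interchange of finite sums.
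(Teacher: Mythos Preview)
Your proposal is correct. Your ``alternative'' route --- decomposing $n = Ab_1b_2$, separating $b_1 \neq b_2$ from $b_1 = b_2$, expanding $(b\cdot r(b,Al)+l)^p$ by the binomial theorem, and peeling off the $i=0$ term --- is exactly the argument the paper gives.

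Your primary route is a genuine (if small) variation: rather than redoing the binomial expansion, you observe $L(p,b,k) = \sum_{l=0}^{b-1} L(l,p,b,k)$ and sum the recurrence of the preceding theorem over $l$. This is slightly slicker, since it reuses the previous result wholesale and reduces the computation to the two-line simplification $(b^p-1)\sum_l L(l,p,b,k-1) + b\,L(p,b,k-1) = (b^p+b-1)L(p,b,k-1)$. The paper's direct approach, on the other hand, is self-contained and makes the combinatorial origin of the coefficient $b^p+b-1$ (namely $b-1$ from the unequal-digit case plus $b^p$ from the $i=0$ binomial term) more transparent. Either way the bookkeeping you flag around $i=0$ is the only delicate point, and you handle it correctly.
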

\begin{proof}
Again, note that the numbers counted by $L(p,b,k)$ are those which can be written as $Ab_1b_2$ with $A\in \{1,\dots,b-1\} \times \{0,\dots,b-1\}^{k-2},$ and $b_1,b_2 \in \{0,\dots,b-1\}.$ Therefore, the following equations hold:
\begin{align*} 
L(p,b,k) &= \sum_A \Big(\sum_{b_1 \neq b_2} r(b,Ab_1b_2)^p + \sum_{b_1} r(b,Ab_1b_1)^p\Big) \\
&=\sum_A (b-1)\sum_{b_1} r(b,Ab_1)^p + \sum_A\sum_{b_1} (br(Ab_1)+b_1)^p\\
&=(b-1)L(p,b,k-1) + \sum_A\sum_{b_1}\sum_{i=0}^p \binom{p}{i} b^{p-i}r(Ab_1)^{p-i} b_1^i \\
&= (b-1)L(p,b,k-1) + \sum_A\sum_{b_1} b^{p}r(Ab_1)^{p} + \sum_{b_1}\sum_{i=1}^p \sum_A \binom{p}{i} b^{p-i}r(Ab_1)^{p-i} b_1^i\\
&=(b^p + b-1)L(p,b,k-1) + \sum_{b_1}\sum_{i=1}^p b_1^i b^{p-i}\binom{p}{i} L(b_1,p-i,b,k-1).
\end{align*}
Change the name of $b_1$ to $l$ to maintain consistent notation, and we have derived the claimed equation.
\end{proof}

\section{Maple Implementation}
The Maple package \texttt{raboter.txt} available at \texttt{http://sites.math.rutgers.edu/\~{}yb165/raboter.txt} contains functions to implement this recurrence. The most important are \texttt{SumPowers(b,k,p)} which finds an expression in terms of $k$ for $L(p,b,k)$ (for fixed $b$ and $p$) and \texttt{GuessGeneralForm(b,n,p)} which conjectures an expression in terms of $k$ and $b$ for $L(p,b,k)$ (for fixed $p$).

For example, this package proves that
\[L(2,2,k) = \frac{2}{3}5^k - \frac{1}{6}2^k-\frac{2}{3}3^k\]
and conjectures that
\begin{align*}L(2,b,k) &= \Big(\frac{1}{6}b^2-\frac{1}{6}b-\frac{1}{3}\Big)(b-1)^k+\Big(-\frac{1}{6}b^2+\frac{1}{3}b -\frac{1}{6}\Big) b^k 
\\
&- \frac{b(b-1)}{2b-1} (2b-1)^k + \frac{2b^3+3b^2-3b-2}{6(b^2+b-1)} (b^2+b-1)^k.
\end{align*}


\begin{thebibliography}{1}
\bibitem{talk} N.J.A. Sloane, \textit{Coordination Sequences, Planing Numbers, and Other Recent Sequences}, talk given in Rutgers University Experimental Mathematics Seminar, Nov. 15, 2018. Video part 1: \texttt{https://vimeo.com/301216222}; video part 2: \texttt{https://vimeo.com/301219515}; slides: \texttt{http://sites.math.rutgers.edu/\~{}my237/expmath/EMNov2018.pdf}.

\bibitem{oeis} N.J.A. Sloane, \textit{The On-Line Encyclopedia of Integer Sequences}, \textbf{Sequence A318921},
\texttt{http://oeis.org/A318921}.

\bibitem{drz} Doron Zeilberger, ``Proof of a Conjecture of Neil Sloane Concerning Claude Lenormand's
``Raboter" Operation (OEIS sequence A318921)" \textit{The Personal Journal of Shalosh B. Ekhad and Doron Zeilberger} (2018). \texttt{http://sites.math.rutgers.edu/\~{}zeilberg/mamarim/mamarimPDF/rabot.pdf}.

\end{thebibliography}
\end{document}